 \newtheorem{thm}{Theorem}[section]
 \newtheorem{lem}[thm]{Lemma}
 \newtheorem{prop}[thm]{Proposition}
 \theoremstyle{definition}
 \newtheorem{conj}[thm]{Conjecture}
 \theoremstyle{remark}
 \newtheorem{rem}{Remark}
 \numberwithin{equation}{section}
\begin{document}
\title[Equivariant Chern numbers and the number of fixed points]
 {Equivariant Chern numbers and the number of fixed points for unitary torus manifolds}

\author{Zhi L\"u and Qiangbo Tan}

\address{School of Mathematical Sciences and The Key Laboratory of Mathematics for Nonlinear Sciences of Ministry of Education,  Fudan
 University, Shanghai, 200433, P.R. China.}
\email{zlu@fudan.edu.cn}
\address{School of Mathematical Sciences,  Fudan
 University, Shanghai, 200433, P.R. China.}
\email{081018011@fudan.edu.cn}
\subjclass[2000]{Primary 58C30, 57S25, 57R75; Secondary 57R91, 53C24, 14M25}
\thanks{Supported in part by grants from  NSFC (No. 10931005) and Shanghai NSF (No. 10ZR1403600), RFDP (No. 20100071110001)}
\keywords{Unitary torus manifold, equivariant Chern number, cobordism}
\email{}

%%% ----------------------------------------------------------------------

\begin{abstract}
Let~$M^{2n}$ be a unitary torus $(2n)$-manifold, i.e., a $(2n)$-dimensional  oriented stable complex connected closed $T^n$-manifold having a nonempty fixed set. In this paper we show that~$M$ bounds
equivariantly if and only if the equivariant Chern numbers ~$\langle
(c_1^{T^n})^i(c_2^{T^n})^j, [M]\rangle=0$ for all~$i, j\in {\Bbb
N}$, where~$c_l^{T^n}$ denotes the~$l$th equivariant Chern class
of~$M$. As a consequence, we also show that if~$M$ does not bound equivariantly then
the number of fixed points is at least~$\lceil{n\over2}\rceil+1$.
\end{abstract}

%%% ----------------------------------------------------------------------
\maketitle

\section{Introduction}
Let~$T^n$ denote the torus of rank~$n$. An  {\em  oriented stable
complex closed $T^n$-manifold} is an  oriented closed smooth manifold~$M$ with an effective $T^n$-action such
that its tangent bundle admits a~$T^n$-equivariant stable complex
structure. It is well-known from~\cite{ggk} that the equivariant
cobordism class of an  oriented stable complex closed $T^n$-manifold with
isolated fixed points is completely determined by its equivariant
Chern numbers. In this paper, we shall pay more attention on the  oriented stable complex  (connected) closed $T^n$-manifolds of dimension $2n$ with nonempty fixed set, which are also called the {\em unitary torus manifolds} or {\em unitary toric manifolds} (see \cite{hm} and \cite{m}).
These geometrical objects are the topological analogues of  compact non-singular toric varieties, and constitute a much wider class than that of quasi-toric manifolds introduced by Davis and Januszkiewicz in \cite{dj}. Also, the nonempty fixed set of a unitary torus manifold must be isolated  since the action is assumed to be effective. In this case, we shall show that the equivariant
cobordism class of a unitary torus manifold is determined by only those equivariant Chern
numbers produced by the first and the second equivariant Chern classes. Our result is stated as follows.

\begin{thm}\label{bounds}
Let $M$ be a unitary torus manifold. Then~$M$
bounds equivariantly if and only if the equivariant Chern
numbers $\langle
(c_1^{T^n})^i(c_2^{T^n})^j, [M]\rangle=0$ for all $i, j\in {\Bbb
N}$, where $[M]$ is the fundamental class of $M$ with respect to the given orientation.
\end{thm}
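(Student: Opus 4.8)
The plan is to reduce the statement, via equivariant localization and the cobordism criterion of \cite{ggk}, to a purely algebraic assertion about the fixed-point data, and then to settle that assertion. The \emph{only if} direction is routine: equivariant Chern numbers are invariants of the equivariant cobordism class, so they all vanish on a manifold that bounds equivariantly, and in particular the numbers $\langle (c_1^{T^n})^i(c_2^{T^n})^j,[M]\rangle$ do. For the \emph{if} direction I would first invoke \cite{ggk}: since the fixed set of $M$ is isolated, $M$ bounds equivariantly if and only if \emph{every} equivariant Chern number $\langle c_{k_1}^{T^n}\cdots c_{k_r}^{T^n},[M]\rangle$ vanishes. So it suffices to show that the hypothesis forces all of these to vanish.

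To make this explicit I would set up the Atiyah--Bott--Berline--Vergne localization formula. Label the fixed points $p_1,\dots,p_m$; at $p_s$ the isotropy representation on $T_{p_s}M$ has weights $\sigma_s=(w_{s,1},\dots,w_{s,n})$, linearly independent over $\mathbb{Q}$ (so that $e_n(\sigma_s)\neq 0$) because the action is effective and $M$ is connected, and the chosen orientation contributes a sign $\epsilon_s\in\{\pm1\}$. Since $\iota_{p_s}^{*}c_k^{T^n}=e_k(\sigma_s)$, the $k$th elementary symmetric polynomial in the weights, localization yields
\[
\langle c_{k_1}^{T^n}\cdots c_{k_r}^{T^n},[M]\rangle=\sum_{s=1}^{m}\epsilon_s\,\frac{e_{k_1}(\sigma_s)\cdots e_{k_r}(\sigma_s)}{e_n(\sigma_s)},
\]
an identity in $\mathbb{Q}(x_1,\dots,x_n)$ whose left-hand side is actually a polynomial. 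As products of Chern classes correspond to monomials in $e_1,\dots,e_n$, and such monomials form a basis of the ring of symmetric polynomials, the theorem becomes the following: if the linear functional $\ell(g):=\sum_s \epsilon_s\,g(\sigma_s)/e_n(\sigma_s)$ vanishes on every $e_1^{i}e_2^{j}$, then it vanishes on the entire ring of symmetric polynomials in $x_1,\dots,x_n$.

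The main obstacle is exactly this propagation from $e_1,e_2$ to all the $e_k$. For $n\le 2$ there is nothing to prove, since the symmetric polynomials in at most two variables are generated by $e_1$ and $e_2$; the real content is controlling $e_3,\dots,e_n$ for $n\ge 3$, directions on which the hypothesis says nothing \emph{a priori}. Here I would package the hypothesis into the generating identity $\sum_s \epsilon_s\,e_n(\sigma_s)^{-1}\bigl[(1-u\,e_1(\sigma_s))(1-v\,e_2(\sigma_s))\bigr]^{-1}=0$ and attempt to bootstrap it to the full generating function over all of $e_1,\dots,e_n$. I expect to need two further ingredients: first, the \emph{polynomiality} of each localized sum, i.e.\ that the denominators $e_n(\sigma_s)$ genuinely cancel, which is a rigid constraint coupling the fixed points; and second, an induction on $n$ obtained by restricting to the subtori fixing the characteristic (codimension-two) submanifolds of $M$, relating the weight data in dimension $2n$ to that in dimension $2(n-1)$. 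Turning this propagation into a proof is the technical heart of the matter; once $\ell\equiv 0$ is established, all equivariant Chern numbers vanish and \cite{ggk} finishes the argument. Tracking how many fixed points these constraints can force should, in the same breath, produce the lower bound $\lceil n/2\rceil+1$ claimed in the abstract.
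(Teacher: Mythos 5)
Your reduction is set up correctly: the ``only if'' direction is indeed routine, and for the ``if'' direction the paper also invokes the Guillemin--Ginzburg--Karshon criterion together with Atiyah--Bott--Berline--Vergne localization, arriving at essentially the same algebraic question about the functional $\ell$. But you stop exactly where the proof has to begin: you identify the propagation from $e_1^ie_2^j$ to all monomials in $e_1,\dots,e_n$ as ``the technical heart of the matter'' and then only sketch a hoped-for strategy (a generating-function bootstrap plus an induction on $n$ via subtori), without carrying it out. That is a genuine gap, and moreover the abstract statement you reduce to --- that $\ell$ vanishing on all $e_1^ie_2^j$ forces $\ell\equiv 0$ on all symmetric polynomials --- is not true for arbitrary signed weight data; it needs a geometric input special to unitary torus manifolds that your proposal never uses.

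The missing ingredient is Masuda's lemma (Lemma 1.3(1) of \cite{m}, quoted as Lemma~\ref{basis} in the paper): because $\dim T^n=\tfrac12\dim M$ and the action is effective, the weights $\lambda_i|_p$ at each fixed point form a $\mathbb{Z}$-\emph{basis} of $H^2(BT^n;\mathbb{Z})$, not merely a $\mathbb{Q}$-linearly independent set as you assert. From this the paper deduces (Lemmas~\ref{sigma1} and~\ref{sigma2}) that $\sigma_1(p)=\sigma_1(q)$ and $\sigma_2(p)=\sigma_2(q)$ already force $\sigma(p)=\sigma(q)$: the Newton identity $s_2=\sigma_1^2-2\sigma_2$ gives $s_2(p)=s_2(q)$, the change-of-basis matrix $A$ between the two weight bases then satisfies $AA^\top=E_n$ over $\mathbb{Z}$ and hence is a signed permutation matrix, and equality of $\sigma_1$ kills the signs. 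So there is no independent ``$e_3,\dots,e_n$ direction'' to propagate into --- the pair $(\sigma_1(p),\sigma_2(p))$ determines the entire isotropy representation, and $\sigma_n(p)$ up to sign. With that rigidity in hand, the hypothesis is exhausted by a double Vandermonde argument: letting $i$ range over $0,\dots,s-1$ (the distinct values of $\sigma_1$) and then $j$ over the distinct values of $\sigma_2$ within each class isolates each signed fixed-point count $m_\sigma/\sigma_n$ and forces $m_\sigma=0$, after which localization shows every equivariant Chern number vanishes. Your proposed induction on $n$ through characteristic submanifolds is not needed and would face its own difficulties (the fixed subtorus data in codimension two does not straightforwardly control the full weight system); the actual proof is entirely a fixed-point-level argument.
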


In~\cite{ck}, Kosniowski studied unitary~$S^1$-manifolds and got
some interesting results on the fixed points of the action, where
``unitary'' means that the tangent bundle of~$M$ admits
an~$S^1$-equivariant stable complex structure. In particular, when
the fixed points are isolated, he proposed the following conjecture.

\begin{conj}[Kosniowski]\label{conj} Suppose that~$M^n$ is a unitary $S^1$-manifold with isolated fixed points. If~$M$ does not bound equivariantly then
the number of fixed points is greater than $f(n)$, where $f(n)$ is some linear function.
\end{conj}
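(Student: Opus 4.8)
The plan is to prove the contrapositive and reduce everything to Theorem \ref{bounds}. Writing $k$ for the number of (necessarily isolated) fixed points $p_1,\dots,p_k$, I will show that if $k\le\lceil n/2\rceil$ then every equivariant Chern number $g_{i,j}:=\langle (c_1^{T^n})^i(c_2^{T^n})^j,[M]\rangle$ vanishes, so that $M$ bounds equivariantly by Theorem \ref{bounds}; the contrapositive then yields the linear lower bound $k\ge\lceil n/2\rceil+1$ for non-bounding $M$, which is exactly the shape asserted by Conjecture \ref{conj}. The first step is to apply the Atiyah--Bott--Berline--Vergne localization formula. At each $p_s$ the tangent representation has weights $w_{s,1},\dots,w_{s,n}$, a $\mathbb{Q}$-basis of the weight lattice tensored with $\mathbb{Q}$, and localization gives
\[
g_{i,j}=\sum_{s=1}^{k}\frac{\sigma_1(w_s)^{\,i}\,\sigma_2(w_s)^{\,j}}{\sigma_n(w_s)},
\]
where $\sigma_1,\sigma_2,\sigma_n$ are the elementary symmetric functions; set $c_1(s):=\sigma_1(w_s)$, $c_2(s):=\sigma_2(w_s)$, $\chi(s):=\sigma_n(w_s)=\prod_l w_{s,l}$. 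I will package this as a single functional $L$ on $\mathbb{Q}(x_1,\dots,x_n)[X,Y]$ by $L(X^iY^j)=g_{i,j}$, so that $L=\sum_s \chi(s)^{-1}\,\mathrm{ev}_{P_s}$ with nodes $P_s=(c_1(s),c_2(s))$; by Theorem \ref{bounds}, $M$ bounds equivariantly precisely when $L\equiv 0$.

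Next I would exploit the degree constraints that closedness forces on the fixed-point data. Since the equivariant pushforward sends $H^{2d}_{T^n}(M)$ to $H^{2d-2n}_{T^n}(\mathrm{pt})$, one has $g_{i,j}=0$ whenever $i+2j<n$; equivalently $L$ annihilates the subspace $V_{<n}$ spanned by the monomials $X^aY^b$ with $a+2b<n$. The crucial observation is that $V_{<n}$ already contains the $\lceil n/2\rceil$ pure powers $1,Y,Y^2,\dots,Y^{\lceil n/2\rceil-1}$ (of weighted degrees $0,2,\dots<n$). Grouping the fixed points by the value $\beta$ of $c_2$ and setting $V_\beta=\sum_{c_2(s)=\beta}\chi(s)^{-1}$, the relations $L(Y^j)=0$ for $0\le j\le\lceil n/2\rceil-1$ become a Vandermonde system $\sum_\beta V_\beta\,\beta^{\,j}=0$ in the distinct values $\beta$. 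If there are at most $\lceil n/2\rceil$ such values, the invertibility of the Vandermonde matrix forces every $V_\beta=0$; this is exactly where the quantity $\lceil n/2\rceil$ is produced.

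To upgrade this to the vanishing of all $g_{i,j}$, and not merely the pure $c_2$-moments, I would argue by separation at the full nodes. Let $P_1,\dots,P_N$, $N\le k\le\lceil n/2\rceil$, be the distinct nodes with summed weights $W_t$, so $L=\sum_t W_t\,\mathrm{ev}_{P_t}$ and $L\equiv 0\iff W_t=0$ for all $t$. If for each $t$ one produces $\psi_t\in V_{<n}$ with $\psi_t(P_{t'})=\delta_{tt'}$, then $W_t=L(\psi_t)=0$ and we are done. The existence of such interpolating polynomials of weighted degree $<n$ for at most $\lceil n/2\rceil$ distinct nodes should again follow from a Vandermonde argument: the powers $1,Y,\dots,Y^{N-1}$ separate nodes with distinct $c_2$-coordinates, and mixed monomials $X^aY^b\in V_{<n}$ handle nodes sharing a $c_2$-value, everything computed over the field $\mathbb{Q}(x_1,\dots,x_n)$. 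Granting the interpolation, $L\equiv 0$, hence $M$ bounds equivariantly; the contrapositive gives $k\ge\lceil n/2\rceil+1$, a linear lower bound in $n$ (equivalently $\lceil (2n)/4\rceil+1$ in the real dimension $2n$), confirming Conjecture \ref{conj} in the torus setting.

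The main obstacle is precisely this last step: verifying that at most $\lceil n/2\rceil$ fixed-point nodes always impose independent conditions on $V_{<n}$, for every unitary torus manifold and over $\mathbb{Q}(x_1,\dots,x_n)$ rather than over an algebraically closed field. Degenerate configurations must be controlled, such as several fixed points sharing one $c_2$-value, or nodes that collapse at special values of the $x_i$; here I expect to invoke the structural fact that the weights $w_{s,1},\dots,w_{s,n}$ form a $\mathbb{Q}$-basis at each fixed point. Making the separation uniform, and thereby pinning the constant to exactly $\lceil n/2\rceil+1$ rather than a weaker linear bound, is where the real work lies.
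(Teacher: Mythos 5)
Your overall route---reduce to Theorem \ref{bounds}, apply ABBV localization, exploit the degree vanishing $g_{i,j}=0$ for $i+2j<n$, and run Vandermonde arguments in the node coordinates $(c_1,c_2)$---is exactly the strategy of the paper's proof of Theorem \ref{number}. (Note the scope: the statement you were given is Kosniowski's conjecture for unitary $S^1$-manifolds, which neither you nor the paper proves in general; what your argument targets, correctly, is the paper's torus-manifold analogue with the bound $\lceil n/2\rceil+1$.) However, your proposal stops short at the one step that carries the content: you explicitly leave open the existence of interpolating elements $\psi_t\in V_{<n}$ with $\psi_t(P_{t'})=\delta_{tt'}$ for up to $\lceil n/2\rceil$ nodes, calling it ``where the real work lies.'' As written this is a genuine gap: your pure-$Y$ Vandermonde only kills the grouped sums $V_\beta$ over $c_2$-classes, which says nothing when several nodes share a $c_2$-value, and ``mixed monomials handle nodes sharing a $c_2$-value'' is an assertion, not an argument---one must verify that the separating monomials fit inside the weighted-degree budget $i+2j<n$ \emph{simultaneously for every possible configuration} of classes.

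The missing idea is a short counting argument, and it is precisely what the paper supplies as Lemma \ref{opt} together with the nested Vandermonde of Proposition \ref{equ}. Group the $\ell=|M^{T^n}|$ fixed points by $c_1$-value into classes $\mathcal{A}_1,\dots,\mathcal{A}_s$ of sizes $a_1,\dots,a_s$. Separating the $s$ distinct $c_1$-values costs $X$-degree at most $s-1$, and separating the distinct $c_2$-values within a single class costs $Y$-degree at most $\max_k a_k-1$; products $f(X)g(Y)$ then interpolate all nodes with weighted degree at most $(s-1)+2(\max_k a_k-1)=s+2\max_k a_k-3$. Since $a_1+\dots+a_s=\ell$ forces $s+2\max_k a_k\le 2\ell+1$ (the paper's Lemma \ref{opt}), the budget is at most $2\ell-2\le 2\lceil n/2\rceil-2\le n-1<n$ whenever $\ell\le\lceil n/2\rceil$, so the interpolants lie in $V_{<n}$ and your conclusion $W_t=L(\psi_t)=0$ goes through. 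Moreover, the degeneracy worries you raise in your last paragraph evaporate: the Vandermonde determinants are products of differences of \emph{distinct} homogeneous elements of the integral domain $H^*(BT^n;\mathbb{Q})$, hence nonzero over the fraction field $\mathbb{Q}(x_1,\dots,x_n)$; no specialization of the variables ever enters, and the basis property of the weights (Masuda's Lemma \ref{basis}) is needed only inside the proof of Theorem \ref{bounds}, via Lemmas \ref{sigma1} and \ref{sigma2}, which you are quoting as a black box anyway. With this counting lemma inserted, your argument coincides with the paper's proof of Theorem \ref{number}.
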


\begin{rem}
As was noted by Kosniowski in \cite{ck},  the most likely function is $f(n)={n\over 4}$, so the number of fixed points
of $M^n$ is at least~$[{n\over 4}]+1$.
\end{rem}

With respect to this conjecture, recently some related works have been done.
For example,  Pelayo and Tolman in~\cite{pt} studied compact
symplectic manifolds with symplectic circle actions, and
proved that if the weights induced from the isotropy representations
on the fixed pionts of such a $S^1$-manifold satisfy some subtle condition, then the action
has at least~$n+1$ isolated fixed points. In~\cite{ll}, Ping Li and Kefeng
Liu showed that if~$M^{2mn}$ is an almost complex manifold and there
exists a partition~$\lambda=(\lambda_1,\dots,\lambda_r)$ of weight
$m$ such that the corresponding Chern number~$\langle(c_{\lambda_1}\dots
c_{\lambda_r})^n, [M]\rangle$ is nonzero, then for any~$S^1$-action on~$M$,
it must have at least~$n+1$ isolated fixed points.

\vskip .2cm

In the case of the unitary torus manifolds,  comparing with
Kosniowski's Conjecture~\ref{conj}, we can apply Theorem~\ref{bounds} to obtain the following result:

\begin{thm}\label{number}
Suppose that~$M^{2n}$ is a $(2n)$-dimenional unitary torus manifold. If~$M$ does not bound equivariantly, then
the number of fixed points is at least~$\lceil{n\over2}\rceil+1$, where $\lceil{n\over2}\rceil$ denotes the minimal integer no less than ${n\over 2}$.
\end{thm}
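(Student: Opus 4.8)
The plan is to deduce Theorem~\ref{number} from Theorem~\ref{bounds} by a counting argument. Suppose $M^{2n}$ does not bound equivariantly. By Theorem~\ref{bounds} there exist $i,j\in\N$ with $\langle(c_1^{T^n})^i(c_2^{T^n})^j,[M]\rangle\neq 0$. Since the equivariant Chern classes live in degrees matching the dimension, we must have $2i+4j=2n$, i.e. $i+2j=n$, so the nonvanishing equivariant Chern number is a sum over the fixed points of a localized contribution. The fixed set is isolated, say $M^{T^n}=\{p_1,\dots,p_k\}$, and at each fixed point $p$ the isotropy representation decomposes into $n$ weight vectors $\lambda_1(p),\dots,\lambda_n(p)\in \H(T^n,S^1)\cong\Z^n$. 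The strategy is to write the equivariant Chern number explicitly via the Atiyah--Bott--Berline--Vergne localization formula and bound the number of fixed points below in terms of $n$.

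First I would set up localization: for the class $(c_1^{T^n})^i(c_2^{T^n})^j$ the evaluation against $[M]$ equals a sum over the fixed points, where each summand is built from the elementary symmetric functions $e_1(\lambda(p)),e_2(\lambda(p))$ in the weights (coming from $c_1^{T^n},c_2^{T^n}$ restricted to $p$) divided by the equivariant Euler class $\prod_{l=1}^n\lambda_l(p)$. After clearing the common denominator $\prod_p\prod_l\lambda_l(p)$, the nonvanishing of the Chern number says that a certain polynomial identity in the weight variables does not vanish identically. Next I would reorganize this as a statement about a system: each fixed point contributes one term, and the independence needed to make the total sum nonzero forces the fixed points to be ``spread out'' enough. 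Concretely, I expect to associate to the $k$ fixed points a collection of vectors or polynomials and argue that if $k$ were too small the localized sum would be forced to vanish for all admissible $(i,j)$ with $i+2j=n$.

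The key combinatorial step is to turn ``nonvanishing of some $\langle(c_1^{T^n})^i(c_2^{T^n})^j,[M]\rangle$'' into a lower bound $k\geq\lceil n/2\rceil+1$. The natural mechanism is that the monomials $(c_1^{T^n})^i(c_2^{T^n})^j$ with $i+2j=n$ span only a $(\lfloor n/2\rfloor+1)$-dimensional space of symmetric data, since $j$ ranges over $0,1,\dots,\lfloor n/2\rfloor$ (giving $\lfloor n/2\rfloor+1$ choices, with $i=n-2j\geq 0$ determined). I would argue that the localization data of the fixed points defines a linear map whose rank is at most the number $k$ of fixed points; if all $\lfloor n/2\rfloor+1$ of these Chern numbers vanished, then by Theorem~\ref{bounds} $M$ would bound equivariantly. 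Hence at least one does not vanish, and I want to show this requires at least $\lceil n/2\rceil+1$ fixed points. The matching of $\lfloor n/2\rfloor+1$ (the count of admissible $j$) with $\lceil n/2\rceil+1$ for the fixed-point bound will need care about parity, but these differ by the inclusion of the endpoint cases and I would track the exact dimension count.

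The main obstacle I anticipate is the rank/dimension bookkeeping in the localization computation: showing precisely that fewer than $\lceil n/2\rceil+1$ fixed points forces every $\langle(c_1^{T^n})^i(c_2^{T^n})^j,[M]\rangle$ to vanish. This amounts to proving that the fixed-point contributions, viewed as functionals on the span of the relevant monomials, cannot have full rank when $k$ is too small — essentially a Vandermonde-type or rank argument on the data $\{(e_1(\lambda(p)),e_2(\lambda(p)))\}_{p}$ together with the Euler-class denominators. The delicate part is that the denominators $\prod_l\lambda_l(p)$ differ from point to point, so the functionals are not simply evaluations of a fixed polynomial, and I would need to control the homogeneous degrees after clearing denominators to extract a clean bound. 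Once that rank bound is established, combining it with Theorem~\ref{bounds} gives the nonbounding condition an immediate numerical consequence, yielding the stated lower bound $\lceil n/2\rceil+1$.
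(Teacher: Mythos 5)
Your overall strategy (argue the contrapositive via localization and a Vandermonde-type rank bound) is in the right family, but two of your key steps are wrong as stated, and the actual mechanism that produces the bound $\lceil n/2\rceil+1$ is missing. First, the claim that a nonvanishing $\langle(c_1^{T^n})^i(c_2^{T^n})^j,[M]\rangle$ forces $i+2j=n$ is false: these are \emph{equivariant} Chern numbers, i.e.\ equivariant pushforwards to a point taking values in $H^*(BT^n;\Z)$, and Section~\ref{set2} notes explicitly that they can be nonzero when $i+2j>n$. The useful degree statement is the opposite one: $i+2j<n$ forces $\langle(c_1^{T^n})^i(c_2^{T^n})^j,[M]\rangle=0$ for degree reasons, and these ``free'' vanishings are the only input available when you argue the contrapositive. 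Second, your proposed endgame --- if all $\lfloor n/2\rfloor+1$ of the Chern numbers with $i+2j=n$ vanish then $M$ bounds by Theorem~\ref{bounds} --- misquotes Theorem~\ref{bounds}, which requires vanishing for \emph{all} $i,j\in\N$; the $i+2j=n$ stratum alone gives you nothing, and in any case $\lfloor n/2\rfloor+1\neq\lceil n/2\rceil+1$ for odd $n$, so that dimension count cannot produce the stated bound.

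What is missing is the combinatorial bridge between the number of fixed points and the ranges of $i$ and $j$ actually consumed by the Vandermonde argument. The paper groups the fixed points into $s$ classes $\mathcal{A}_1,\dots,\mathcal{A}_s$ according to the value of $\sigma_1(p)$; the double Vandermonde argument from the proof of Theorem~\ref{bounds} only uses the Chern numbers with $i\le s-1$ and $j\le\max_k\{|\mathcal{A}_k|\}-1$, hence with $i+2j\le s+2\max_k\{|\mathcal{A}_k|\}-3$. If this quantity is $<n$, every Chern number one needs vanishes for free, one concludes $m_\sigma=0$ for all $\sigma$, and $M$ bounds (Proposition~\ref{equ}). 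The elementary optimization $r+2\max\{a_i\}\le 2\ell+1$ for positive integers with $a_1+\cdots+a_r=\ell$ (Lemma~\ref{opt}) then converts the hypothesis $|M^{T^n}|<{n\over 2}+1$ into $s+2\max_k\{|\mathcal{A}_k|\}-3<n$, which is exactly what yields the bound $\lceil n/2\rceil+1$. Without some substitute for this grouping-plus-optimization step, your rank heuristic on the fixed-point data does not close.
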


\begin{rem}
It should be interesting to discuss whether there exists  an example of $(2n)$-dimenional unitary torus manifolds,  which
doesn't bound euqivariantly but has exactly~$\lceil{n\over2}\rceil+1$ isolated fixed
points for every $n$.
\end{rem}

%The paper is organized as follows.  In Section~\ref{set2} we shall review the Guillemin--Ginzburg--Karshon theorem
%and the Atiyah--Bott--Berline--Vergne localization theorem. The proofs of Theorems~\ref{bounds} and~\ref{number} will be completed in Section~\ref{proof}. 

\section{Preliminaries}\label{set2}
\subsection{Equivariant Chern characteristic numbers}
The~{\em equivariant Chern characteristic numbers}~$c_\omega^{T^n}(M)$ of
an oriented stable complex closed $T^n$-manifold~$M$ are defined as
$$c_\omega^{T^n}(M)=<(c_1^{T^n})^{i_1}\dots(c_k^{T^n})^{i_k}, [M]>\in H^*(BT^n;{\Bbb Z})$$
where~$\omega=(i_1,\dots,i_k)$ is a multi-index and~$c_l^{T^n}$ is the
$l$th equivariant Chern class of~$M$. Unlike the ordinary Chern characteristic numbers,  these equivariant Chern
characteristic numbers can be nonzero polynomials in $H^*(BT^n;{\Bbb Z})$ if the degree of the product
$\deg(c_1^{T^n})^{i_1}\dots(c_k^{T^n})^{i_k}$
is greater than~$\dim M/2$.

\vskip .2cm

If the oriented stable complex closed $T^n$-manifold $M$ has only isolated fixed
points, then it is known from \cite{ggk} that  at each fixed point $p\in M^{T^n}$ the tangent space
$T_pM$ is equipped with the induced $T^n$-action, orientation and
complex structure, and the $T^n$-equivariant cobordism class of $M$ is determined by
the complex $T^n$-representations $T_pM$ at all $p\in M^{T^n}$ and their orientations. Then Guillemin, Ginzburg and Karshon in \cite{ggk} applied  Atiyah--Bott--Berline--Vergne localization theorem to give the following theorem.

\begin{thm}[Guillemin--Ginzburg--Karshon]\label{iff}
Let~$M$ be an oriented stable complex closed $T^n$-manifold with isolated
fixed points. Then~$M$ bounds equivariantly if and only if all equivariant Chern
characteristic numbers of~$M$ are equal to zero.

\end{thm}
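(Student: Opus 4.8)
Both implications rest on the Atiyah--Bott--Berline--Vergne localization theorem. Write $\dim M=2m$. At a fixed point $p\in M^{T^n}$ the induced complex structure splits $T_pM$ into one-dimensional weight spaces with nonzero weights $w_1(p),\dots,w_m(p)\in H^2(BT^n;\mathbb{Z})$, and comparing the given orientation with the complex one attaches a sign $\epsilon_p\in\{\pm1\}$. Since the equivariant total Chern class restricts at $p$ to $\prod_{j=1}^m\bigl(1+w_j(p)\bigr)$, we have $c_l^{T^n}|_p=\sigma_l\bigl(w_1(p),\dots,w_m(p)\bigr)$, the $l$th elementary symmetric polynomial in the weights, and localization gives
\begin{equation*}
\bigl\langle(c_1^{T^n})^{i_1}\cdots(c_k^{T^n})^{i_k},[M]\bigr\rangle=\sum_{p\in M^{T^n}}\epsilon_p\,\frac{\prod_{l}\sigma_l\bigl(w(p)\bigr)^{i_l}}{w_1(p)\cdots w_m(p)}\in H^*(BT^n;\mathbb{Q}).
\end{equation*}
The ``only if'' direction is now the usual cobordism invariance: if $M=\partial W$ equivariantly then the equivariant stable complex structure, and hence every $c_l^{T^n}$, extends over $W$, so integration over the boundary kills all the numbers above. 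The whole content lies in the converse.

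For the converse, suppose every equivariant Chern number vanishes. By the theorem of Guillemin--Ginzburg--Karshon recalled above, the equivariant cobordism class of $M$ is determined by the fixed point data $\bigl\{\bigl(w_1(p),\dots,w_m(p);\epsilon_p\bigr)\bigr\}_{p\in M^{T^n}}$, so it is enough to prove that this data represents $0$ in the equivariant cobordism group. The monomials $\prod_l\sigma_l^{i_l}$ form a $\mathbb{Q}$-basis of the symmetric polynomials in $m$ variables, because $\sigma_1,\dots,\sigma_m$ are algebraically independent generators; hence the hypothesis is equivalent to the family of identities
\begin{equation*}
\sum_{p\in M^{T^n}}\epsilon_p\,\frac{Q\bigl(w_1(p),\dots,w_m(p)\bigr)}{w_1(p)\cdots w_m(p)}=0\quad\text{in }\mathbb{Q}(x_1,\dots,x_n),
\end{equation*}
one for each symmetric polynomial $Q$.

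The heart of the matter --- and the step I expect to be the main obstacle --- is to deduce from this family of rational-function identities that the fixed point data actually bounds; equivalently, that the Atiyah--Bott--Berline--Vergne pairing between equivariant cobordism and $H^*(BT^n)$ is nondegenerate. My plan is a residue analysis in $\mathbb{Q}(x_1,\dots,x_n)$: grouping the points by their weight multiset and letting $Q$ range over a family tailored to each pole locus, one isolates, along the hyperplane $\{w=0\}$ and for each order of pole, a definite signed combination of fixed point contributions and forces it to vanish. The delicate point is that distinct weight multisets may share weights, so their contributions interfere in the denominators; the residues must therefore be extracted iteratively along flags of the hyperplanes $\{w_j(p)=0\}$, and one must check that after clearing a common factor the remaining data is again realizable. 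Once every such refined invariant is shown to annihilate the data, it coincides with a boundary datum, and the cited identification of cobordism classes with fixed point data completes the proof.
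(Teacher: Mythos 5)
The paper does not actually prove this statement: it is quoted from Guillemin--Ginzburg--Karshon \cite{ggk}, so there is no internal proof to compare against. Judged on its own terms, your proposal has the right architecture --- the ``only if'' direction by cobordism invariance, and for the converse the reduction, via the prior result from \cite{ggk} that the equivariant cobordism class is determined by the fixed point data $\bigl\{\bigl(w_1(p),\dots,w_m(p);\epsilon_p\bigr)\bigr\}$, to showing that this data represents zero --- but the essential step is never carried out. You present it as a ``plan'' involving iterated residue extraction along flags of hyperplanes, you yourself flag the interference between fixed points sharing weights as a ``delicate point,'' and you do not resolve it. As written, the converse direction is a program, not a proof, and that is precisely where the whole content of the theorem lies.

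The gap is real but closable by a much simpler device, which is in fact the mechanism the paper itself uses in its proofs of Theorem 1.1 and Proposition 3.6 (there in the refined form of dividing by $\pm\sigma_n$ and invoking van der Monde matrices). Since monomials in $\sigma_1,\dots,\sigma_m$ span all symmetric polynomials, you are free to take $Q=\sigma_m\cdot P=(x_1\cdots x_m)\,P$ for an arbitrary symmetric $P$; this absorbs the denominator completely, and the hypothesis becomes $\sum_{p}\epsilon_p\,P\bigl(w_1(p),\dots,w_m(p)\bigr)=0$ with no poles left to analyze. Grouping fixed points by their weight multiset $W$, this reads $\sum_{W} m_W\,P(W)=0$, where $m_W$ is the signed count of fixed points with data $W$. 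Distinct multisets have distinct tuples $\bigl(\sigma_1(W),\dots,\sigma_m(W)\bigr)$, so after specializing the generators of $H^2(BT^n;\mathbb{Q})$ at a generic rational point one separates the $W$'s by a Vandermonde argument and concludes $m_W=0$ for every $W$; vanishing of all these signed counts is exactly the condition, under the identification of cobordism classes with fixed point data, that $M$ bounds equivariantly. Once the Euler class is folded into $Q$, the difficulty you anticipate --- shared weights interfering in the denominators --- simply does not arise.
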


\subsection{Unitary torus manifolds and Atiyah--Bott--Berline--Vergne localization theorem}
Let $M^{2n}$ be a $(2n)$-dimensional unitary torus manifold. Following \cite{m}, we say that a closed, connected, real
codimension-2 submanifold of $M^{2n}$ is called {\em characteristic} if it is a fixed set component by a certain circle subgroup of $T^n$ and contains at least one $T^n$-fixed point. Then $M^{2n}$ has finitely many such characteristic submanifolds. By $M_i, i\in [m]=\{1, ..., m\}$ we denote all characteristic submanifolds of $M^{2n}$, and by $\zeta_i$ denote the corresponding normal bundle over $M_i$, and by $T_i$ denote the circle subgroup fixing $M_i$ pointwise. Then, for each $p\in M^{T^n}$, we can write the tangent $T^n$-representation at $p$ as
$$T_pM=\bigoplus_{i\in I(p)}\zeta_i|_p$$
where $I(p)=\{i|p\in M_i\}\subset [m]$ and $\zeta_i|_p$ is the restriction of $\zeta_i$ to $p$. So $|I(p)|=n$. Each
$M_i$ may define an element $\lambda_i$ in the equivariant cohomology $H^2_{T^n}(M;{\Bbb Z})$. Actually, the inclusion $M_i\hookrightarrow M$ may induce an equivariant Gysin homomorphism: $H^*_{T^n}(M_i;{\Bbb Z})\longrightarrow
H^{*+2}_{T^n}(M;{\Bbb Z})$, so that $\lambda_i\in H^2_{T^n}(M;{\Bbb Z})$ can be chosen as the image of the identity in $H^0_{T^n}(M_i;{\Bbb Z})$. It was shown in \cite[Theorem 3.1]{m} that the total equivariant Chern characteristic
class $c^{T^n}(TM)$ of the tangent bundle $TM$ of $M$ can be expressed as
\begin{equation*}
c^{T^n}(TM)=\prod_{i\in [m]}(1+\lambda_i)
\end{equation*}
in $\widehat{H}^*_{T^n}(M;{\Bbb Z})=H^*_{T^n}(M;{\Bbb Z})/S\text{-torsion}$ where $S$ is the subset of $H^*(BT^n;{\Bbb Z})$ generated multiplicatively by nonzero elements of $H^2(BT^n;{\Bbb Z})$.
It is well-known that the restriction $\lambda_i|_p$ can be regarded as the top equivariant Chern class of $\zeta_i|_p$.
Hence, the total equivariant Chern characteristic class of the vector bundle $T_pM\longrightarrow \{p\}$ is
\begin{equation*}
c^{T^n}(T_pM)=c^{T^n}(TM)|_p=\prod_{i\in I(p)}(1+\lambda_i|_p).
\end{equation*}
In particular, Masuda in \cite{m} also showed the following result, which will be very useful in our discussion later. \begin{lem}[{\cite[Lemma 1.3(1)]{m}}]\label{basis}
 $\{\lambda_i|_p\big| i\in I(p)\}$ forms a basis of $H^2_{T^n}(\{p\};{\Bbb Z})\cong
H^2(BT^n;{\Bbb Z})$.
\end{lem}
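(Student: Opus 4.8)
The plan is to reduce the statement to the faithfulness of the isotropy representation of $T^n$ at $p$. First I would fix the standard identification of $H^2(BT^n;\mathbb{Z})$ with the weight lattice $\Lambda^{\ast}=\operatorname{Hom}(\Lambda,\mathbb{Z})$, where $\Lambda\cong\mathbb{Z}^n$ is the integral lattice of $T^n$. Since $p$ is an isolated fixed point, $T_pM$ is a complex $T^n$-representation of complex dimension $n$, and the decomposition $T_pM=\bigoplus_{i\in I(p)}\zeta_i|_p$ exhibits it as a sum of $n$ complex lines, the line $\zeta_i|_p$ carrying a weight $w_i\in\Lambda^{\ast}$. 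Under the identification above, $\lambda_i|_p$ is exactly the equivariant first Chern class of the line $\zeta_i|_p$, i.e. $\lambda_i|_p=w_i$. Thus the lemma becomes the assertion that the $n$ weights $\{w_i : i\in I(p)\}$ form a $\mathbb{Z}$-basis of $\Lambda^{\ast}$.

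The key step is to show that the isotropy homomorphism $\rho\colon T^n\to U(T_pM)$, which in coordinates is $t\mapsto(\chi_{w_1}(t),\dots,\chi_{w_n}(t))$, is injective. Let $K=\ker\rho$; then $K$ acts trivially on $T_pM$. By the Bochner linearization (slice) theorem at the fixed point $p$, a $T^n$-invariant neighborhood of $p$ is equivariantly diffeomorphic to $T_pM$ with its linear action, so $K$ fixes this neighborhood pointwise. Hence the component of the fixed set $M^K$ through $p$ is $2n$-dimensional, therefore open; being also closed and $M$ connected, it must be all of $M$. Effectiveness of the action then forces $K=\{e\}$, so $\rho$ is injective.

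It remains to translate injectivity of $\rho$ into the desired unimodularity. Assembling the weights into the integer matrix $W$ whose rows are the $w_i$ with respect to a fixed basis of $\Lambda^{\ast}$, a direct computation identifies $\ker\rho$ with $\{x\in\mathfrak{t} : w_i(x)\in\mathbb{Z}\text{ for all }i\}/\Lambda$. This group is trivial precisely when $W\in GL_n(\mathbb{Z})$, that is $\det W=\pm1$, which is exactly the statement that $\{w_i\}$ is a $\mathbb{Z}$-basis of $\Lambda^{\ast}$. In particular, faithfulness already forces the weights to be linearly independent over $\mathbb{Q}$ (otherwise $\det W=0$ and $K$ would be positive-dimensional) as well as unimodular, so both halves of ``basis'' follow from the single input that $\rho$ is injective. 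I expect the main obstacle to be the careful bookkeeping in this last step: one must track the duality between the map of tori $\rho$ and its effect on the character lattice, so that injectivity of $\rho$ lines up with $\det W=\pm1$ rather than with mere $\mathbb{Q}$-independence of the weights.
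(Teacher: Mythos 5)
Your argument is correct. The paper does not prove this lemma itself---it is imported verbatim from Masuda \cite[Lemma 1.3(1)]{m}---and your proof (identify $\lambda_i|_p$ with the weight of the line $\zeta_i|_p$, use Bochner linearization together with connectedness and effectiveness to see that the isotropy representation at $p$ is faithful, then translate faithfulness into $\det W=\pm1$ rather than mere $\mathbb{Q}$-independence) is essentially the standard argument given in that reference, with the one genuinely delicate point (unimodularity versus rational independence) handled correctly.
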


On the other hand, the normal bundle to $p$ in $M^{2n}$ is $T_pM$ with the orientation inherited from $M^{2n}$.
Thus, the equivariant Euler class $e^{T^n}(T_pM)$ of this bundle is $\pm c_n^{T^n}(T_pM)=\pm\prod_{i\in I(p)}\lambda_i|_p$, where the sign is positive if the orientation of $T_pM$ agrees with the complex orientation and negative otherwise.

\vskip .2cm

Each $c^{T^n}(T_pM)=\prod_{i\in I(p)}(1+\lambda_i|_p)=1+\sigma_1(p)+\cdots +\sigma_n(p)$
determines a collection $\sigma(p)=(\sigma_1(p), ..., \sigma_n(p))$, where $\sigma_j(p)$ denotes the $j$th elementary symmetric function over $n$ variables $\lambda_i|_p, i\in I(p)$. Clearly, $\sigma(p)$
 determines the representation $T_pM$, but not the orientation
of~$T_pM$ inherited from~$M$.

\vskip .2cm

Now choose a basis $\Lambda=\{\alpha_1, ..., \alpha_n\}$ in $H^2(BT^n;{\Bbb Z})$. Then we obtain a collection
$\sigma(\Lambda)=(\sigma_1(\Lambda), ..., \sigma_n(\Lambda))$, where $\sigma_i(\Lambda)$ means the $i$th elementary symmetric function $\sigma_i(\alpha_1, ..., \alpha_n)$.
Set
\begin{eqnarray*}
m_{\sigma(\Lambda)}&=&\sharp\{p\in
M^{T^n}\big|\sigma(p)=\sigma(\Lambda), e^{T^n}(T_pM)=\sigma_n(\Lambda)\}\\
&&-\sharp\{p\in
M^{T^n}\big|\sigma(p)=\sigma(\Lambda), e^{T^n}(T_pM)=-\sigma_n(\Lambda)\}
\end{eqnarray*}
where we let $m_{\sigma(\Lambda)}=0$ if $\sigma(\Lambda)$ does not occur as the collection of symmetric functions for any of the $T_pM$'s.  With the above understood, now we can state the
Atiyah--Bott--Berline--Vergne localization theorem in our case as
follows:

\begin{thm}[A--B--B--V localization theorem]\label{ABBV}
Let~$M^{2n}$ be a $(2n)$-dimensional unitary torus manifold.  Then
$$c_\omega^{T^n}(M)=\sum_{p\in M^{T^n}}{\sigma_1(p)^{i_1}\dots\sigma_n(p)^{i_n}\over\pm\sigma_n(p)}
=\sum_{\sigma(\Lambda)}m_{\sigma(\Lambda)}\sigma_1(\Lambda)^{i_1}\dots\sigma_n(\Lambda)^{i_n-1}$$
where~$\omega=(i_1,\dots,i_n)$ is a multi-index.
\end{thm}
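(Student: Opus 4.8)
The plan is to recognize the asserted formula as a direct specialization of the classical Atiyah--Bott--Berline--Vergne localization theorem, applied to one particular equivariant cohomology class. Recall that for an oriented $T^n$-manifold with isolated fixed point set $M^{T^n}$ and any equivariantly closed class $\alpha\in H^*_{T^n}(M;{\Bbb Q})$, the equivariant pushforward to a point satisfies
\begin{equation*}
\langle\alpha,[M]\rangle=\sum_{p\in M^{T^n}}\frac{\alpha|_p}{e^{T^n}(T_pM)},
\end{equation*}
an identity in the localized ring $S^{-1}H^*(BT^n;{\Bbb Q})$. I would apply this with $\alpha=(c_1^{T^n})^{i_1}\cdots(c_n^{T^n})^{i_n}$, which is automatically a class in equivariant cohomology since it is a product of equivariant Chern classes, so that the left-hand side $\langle\alpha,[M]\rangle$ is by definition exactly the equivariant Chern number $c_\omega^{T^n}(M)$.

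The first equality then reduces to evaluating the two ingredients $\alpha|_p$ and $e^{T^n}(T_pM)$ at each fixed point, both of which are already recorded in the preliminaries. Since $c^{T^n}(T_pM)=\prod_{i\in I(p)}(1+\lambda_i|_p)=1+\sigma_1(p)+\cdots+\sigma_n(p)$, restriction of the product of equivariant Chern classes gives $\alpha|_p=\sigma_1(p)^{i_1}\cdots\sigma_n(p)^{i_n}$, while the equivariant Euler class of the normal bundle to $p$ equals $\pm c_n^{T^n}(T_pM)|_p=\pm\sigma_n(p)$, the sign being positive exactly when the orientation of $T_pM$ inherited from $M$ agrees with its complex orientation. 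Substituting these two evaluations into the localization formula yields the middle expression $\sum_{p}\sigma_1(p)^{i_1}\cdots\sigma_n(p)^{i_n}/(\pm\sigma_n(p))$.

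For the second equality I would regroup the sum over fixed points according to the value of the collection $\sigma(p)$. Two fixed points sharing a common value $\sigma(p)=\sigma(\Lambda)$ contribute numerically identical numerators $\sigma_1(\Lambda)^{i_1}\cdots\sigma_n(\Lambda)^{i_n}$ but denominators $\pm\sigma_n(\Lambda)$ differing only in sign; cancelling one factor of $\sigma_n(\Lambda)$ turns each such term into $\pm\sigma_1(\Lambda)^{i_1}\cdots\sigma_n(\Lambda)^{i_n-1}$, where the sign is dictated by whether $e^{T^n}(T_pM)=+\sigma_n(\Lambda)$ or $-\sigma_n(\Lambda)$. By the very definition of $m_{\sigma(\Lambda)}$ as the signed count of such fixed points, the terms with a fixed $\sigma(\Lambda)$ add up to $m_{\sigma(\Lambda)}\sigma_1(\Lambda)^{i_1}\cdots\sigma_n(\Lambda)^{i_n-1}$, and summing over the distinct values $\sigma(\Lambda)$ gives the right-hand side. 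The only points needing care — and the mild obstacle in an otherwise formal argument — are that the localization identity is an equality in the field of fractions $S^{-1}H^*(BT^n)$ even though the left-hand side is a genuine polynomial in $H^*(BT^n;{\Bbb Z})$, and that the orientation sign must be tracked consistently so that it feeds correctly into the bookkeeping defining $m_{\sigma(\Lambda)}$.
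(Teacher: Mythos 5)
Your proposal is correct and follows exactly the route the paper intends: the paper states this theorem as a direct specialization of the classical Atiyah--Bott--Berline--Vergne localization formula, using the computations already recorded in its preliminaries ($c^{T^n}(T_pM)=\prod_{i\in I(p)}(1+\lambda_i|_p)$, $e^{T^n}(T_pM)=\pm\sigma_n(p)$, and the signed count defining $m_{\sigma(\Lambda)}$). You have simply written out the substitution and the regrouping by the value of $\sigma(p)$ that the paper leaves implicit.
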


\section{Proofs of main results}\label{proof}
First we prove two lemmas which will be used in the proof of
Theorem~\ref{bounds}. Let~$M^{2n}$ be a $(2n)$-dimensional unitary torus manifold and let $p,q\in
M^{T^n}$ be two fixed points.
\begin{lem}\label{sigma1}
 If $\sigma_1(p)=\sigma_1(q)$
and~$\sigma_n(p)=\pm\sigma_n(q)$ then $\sigma(p)=\sigma(q)$.
\end{lem}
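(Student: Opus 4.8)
The plan is to reduce the statement to a purely algebraic fact about two $\mathbb{Z}$-bases of $H^2(BT^n;\mathbb{Z})$ and then exploit unique factorization in the polynomial ring $H^*(BT^n;\mathbb{Z})\cong\mathbb{Z}[t_1,\dots,t_n]$. First I would write the weights at the two fixed points as $x_1,\dots,x_n$ (the classes $\lambda_i|_p$, $i\in I(p)$) and $y_1,\dots,y_n$ (the classes $\lambda_j|_q$, $j\in I(q)$). By Lemma~\ref{basis}, each of these $n$-tuples is a $\mathbb{Z}$-basis of $H^2(BT^n;\mathbb{Z})\cong\mathbb{Z}^n$. The two hypotheses translate into the single-degree identities
\[
\sum_{i=1}^n x_i \;=\; \sum_{j=1}^n y_j,
\qquad
\prod_{i=1}^n x_i \;=\; \pm\prod_{j=1}^n y_j,
\]
living in $H^2$ and $H^{2n}$ respectively, and the goal is to upgrade these two equalities to $\sigma_k(x_1,\dots,x_n)=\sigma_k(y_1,\dots,y_n)$ for every $k$.

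Next I would introduce the change-of-basis matrix: write $y_j=\sum_i A_{ij}x_i$ with $A\in GL_n(\mathbb{Z})$ (this is exactly where the basis property from Lemma~\ref{basis} is used, guaranteeing integer entries and $\det A=\pm 1$). Substituting into the first identity and using the linear independence of the $x_i$ forces every row sum of $A$ to equal $1$, i.e.\ $\sum_j A_{ij}=1$ for all $i$. This is the only information I extract from the condition $\sigma_1(p)=\sigma_1(q)$.

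The heart of the argument is the second identity. Substituting $y_j=\sum_i A_{ij}x_i$ exhibits the squarefree monomial $\pm x_1\cdots x_n$ as a product of the $n$ nonzero linear forms $y_j$. Since $\mathbb{Z}[t_1,\dots,t_n]$ (or its rationalization) is a unique factorization domain in which each $x_i$ is prime and each $y_j$ is an irreducible linear form, the multiset of the $y_j$ must coincide, up to units, with the multiset of the $x_i$; tracking the integrality of the coefficients of $A$ together with the fact that the product of these units is $\pm 1$, I would conclude that $A$ is a signed permutation matrix, $y_j=\epsilon_j x_{\pi(j)}$ with $\epsilon_j\in\{\pm 1\}$ and $\pi$ a permutation. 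Feeding this back into the row-sum condition $\sum_j A_{ij}=1$ then forces every sign $\epsilon_j$ to be $+1$, so that $\{y_1,\dots,y_n\}=\{x_1,\dots,x_n\}$ as sets; all elementary symmetric functions therefore agree and $\sigma(p)=\sigma(q)$.

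I expect the unique-factorization step to be the main obstacle to state cleanly: one must argue that a product of integral linear forms equal (up to sign) to $x_1\cdots x_n$ can only arise from linear forms proportional to single variables, and then pin the proportionality constants to $\pm 1$ and finally eliminate the signs using the $\sigma_1$-condition. The two hypotheses are genuinely used in different ways—the product condition $\sigma_n(p)=\pm\sigma_n(q)$ controls the combinatorial shape of $A$ (it is monomial), while the sum condition $\sigma_1(p)=\sigma_1(q)$ removes the residual sign ambiguity—so neither can be dropped.
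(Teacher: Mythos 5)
Your proposal is correct and follows essentially the same route as the paper: the condition $\sigma_n(p)=\pm\sigma_n(q)$ together with Lemma~\ref{basis} forces the change of basis to be a signed permutation, and the condition $\sigma_1(p)=\sigma_1(q)$ together with linear independence then eliminates the signs. The only difference is that you spell out, via unique factorization of the product of primitive linear forms, the step that the paper asserts directly from Lemma~\ref{basis} (namely that $\{\lambda_i|_p\}=\{\varepsilon_i\lambda_i|_q\}$ with $\varepsilon_i=\pm1$), which is a worthwhile clarification but not a different argument.
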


\begin{proof}
 If $\sigma_n(p)=\pm\sigma_n(q)$, then $\prod_{i\in I(p)}\lambda_i|_p=\pm \prod_{i\in I(q)}\lambda_i|_q$.
 So, by Lemma~\ref{basis} we have that $\{\lambda_i|_p\big| i\in I(p)\}=\{\varepsilon_i\lambda_i|_q\big|i\in I(q)\}$
where~$\varepsilon_i=\pm1$. Furthermore, if $\sigma_1(p)=\sigma_1(q)$, then
$$\sigma_1(p)=\sum_{i\in I(p)}\lambda_i|_p=\sum_{i\in I(q)}\varepsilon_i\lambda_i|_q=\sum_{i\in I(q)}\lambda_i|_q=\sigma_1(q)$$
 so $\sum_{i\in I(q)}(1-\varepsilon_i)\lambda_i|_q=0$.  This implies that
$\varepsilon_i=1$ for all $i\in I(q)$ since $\lambda_i|_q, i\in I(q)$ are linearly independent, and the lemma then follows.
\end{proof}

\begin{lem}\label{sigma2}
$\sigma(p)=\sigma(q)$ if and only if $\sigma_1(p)=\sigma_1(q)$ and
$\sigma_2(p)=\sigma_2(q)$.
\end{lem}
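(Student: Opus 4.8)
The ``only if'' direction needs no work: if $\sigma(p)=\sigma(q)$ then in particular $\sigma_1(p)=\sigma_1(q)$ and $\sigma_2(p)=\sigma_2(q)$. So the plan is to prove the converse, and the strategy is to reduce it to Lemma~\ref{sigma1} by showing that the two hypotheses force $\sigma_n(p)=\pm\sigma_n(q)$; once that is in hand, Lemma~\ref{sigma1} applies directly (its other hypothesis $\sigma_1(p)=\sigma_1(q)$ being one of our assumptions) and yields $\sigma(p)=\sigma(q)$.

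To pass from the \emph{second} symmetric function to the \emph{top} one, I would first pass through the second power sum. Writing $P_2(p)=\sum_{i\in I(p)}(\lambda_i|_p)^2$, Newton's identity gives $P_2(p)=\sigma_1(p)^2-2\sigma_2(p)$, so the assumptions $\sigma_1(p)=\sigma_1(q)$ and $\sigma_2(p)=\sigma_2(q)$ are equivalent to $\sigma_1(p)=\sigma_1(q)$ together with $P_2(p)=P_2(q)$. Thus it suffices to show that the single equality $\sum_{i\in I(p)}(\lambda_i|_p)^2=\sum_{i\in I(q)}(\lambda_i|_q)^2$ in $H^4(BT^n;\mathbb{Z})=\mathrm{Sym}^2 H^2(BT^n;\mathbb{Z})$ already forces the two bases $\{\lambda_i|_p\}$ and $\{\lambda_i|_q\}$ of $H^2(BT^n;\mathbb{Z})$ (they are bases by Lemma~\ref{basis}) to coincide up to sign and reordering, from which $\sigma_n(p)=\pm\sigma_n(q)$ is immediate.

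I would make this precise by fixing a basis of $H^2(BT^n;\mathbb{Z})\cong\mathbb{Z}^n$ and letting $A,B\in GL_n(\mathbb{Z})$ be the integral matrices expressing $\{\lambda_i|_p\}$ and $\{\lambda_i|_q\}$ in it. Expanding shows that the quadratic form $P_2(p)$ has Gram matrix $A^{\mathrm{T}}A$ and $P_2(q)$ has Gram matrix $B^{\mathrm{T}}B$, so the power-sum equality reads $A^{\mathrm{T}}A=B^{\mathrm{T}}B$. Then $Q:=AB^{-1}$ lies in $GL_n(\mathbb{Z})$ and satisfies $Q^{\mathrm{T}}Q=I$; since an orthogonal \emph{integer} matrix is a signed permutation matrix (each column is an integer unit vector, hence $\pm$ a standard basis vector, and orthogonality makes the supports disjoint), we get $\lambda_i|_p=\varepsilon_i\,\lambda_{\pi(i)}|_q$ for a permutation $\pi$ and signs $\varepsilon_i=\pm1$. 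Taking the product over all $n$ indices gives $\sigma_n(p)=\pm\sigma_n(q)$, and Lemma~\ref{sigma1} finishes the argument.

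The step carrying the real weight is the last one: that an integral orthogonal matrix must be a signed permutation. This is exactly where the integrality in Lemma~\ref{basis} --- that the weights form a genuine $\mathbb{Z}$-basis, not merely a $\mathbb{Q}$-basis --- is indispensable. Over $\mathbb{Q}$ or $\mathbb{R}$ the equality $A^{\mathrm{T}}A=B^{\mathrm{T}}B$ only says $AB^{-1}$ is orthogonal, which may be a genuine rotation, and then $\sigma_n(p)=\pm\sigma_n(q)$ can fail; so the arithmetic of the lattice $H^2(BT^n;\mathbb{Z})$, rather than mere linear algebra, is what drives the lemma.
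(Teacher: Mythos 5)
Your proof is correct and follows essentially the same route as the paper: both pass to the power sum via $s_2=\sigma_1^2-2\sigma_2$, use Lemma~\ref{basis} to get an integral change-of-basis matrix whose orthogonality is forced by $s_2(p)=s_2(q)$, conclude it is a signed permutation matrix, and hence reduce to Lemma~\ref{sigma1}. Your introduction of an auxiliary reference basis and the quotient $Q=AB^{-1}$ is only a cosmetic repackaging of the paper's single change-of-basis matrix $A$ with $AA^{\top}=E_n$.
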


\begin{proof} It suffices to show that $\sigma(p)=\sigma(q)$ if $\sigma_1(p)=\sigma_1(q)$ and
$\sigma_2(p)=\sigma_2(q)$.
 Consider $s_2(p)=\sum_{i\in I(p)}(\lambda_i|_p)^2$
and~$s_2(q)=\sum_{i\in I(q)}(\lambda_i|_q)^2$.
If~$\sigma_1(p)=\sigma_1(q)$ and~$\sigma_2(p)=\sigma_2(q)$,
then~$s_2(p)=s_2(q)$ since $s_2=\sigma_1^2-2\sigma_2$ by \cite{ms}.
Since both $\{\lambda_i|_p\big| i\in I(p)\}$ and $\{\lambda_i|_q\big|i\in I(q)\}$ are two bases of $H^2(BT^n;{\Bbb Z})$
by Lemma~\ref{basis}, there is an~$n\times n$
non-degenerate ${\Bbb Z}$-matrix $A$ such that $$(\lambda_i|_p\big| i\in I(p))=(\lambda_i|_q\big|i\in I(q))A.$$ Moreover,
we have that $$s_2(p)-s_2(q)=(\lambda_i|_q\big|i\in I(q))(AA^\top-E_n)(\lambda_i|_q\big|i\in I(q))^\top=0$$
so we conclude that $AA^\top=E_n$, where $E_n$ is the identity matrix. This implies that each row of~$A$ contains
only one~$\pm1$ and the other elements in this row are all 0.
Hence $$\sigma_n(p)=\pm \prod_{i\in I(q)}\lambda_i|_q=\pm\sigma_n(q)$$ and then the proof is
completed by Lemma~\ref{sigma1}.
\end{proof}

Let $\{\sigma_1(p)\big|p\in M^{T^n}\}=\{\tau_1,\dots,\tau_s\}$ and
$\{\sigma_2(p)\big|p\in M^{T^n}\}=\{\eta_1,\dots,\eta_u\}$. Set $$\mathcal{A}_k=\{p\in M^{T^n}\big|\sigma_1(p)=\tau_k\}$$ for $1\leq
k\leq s$ and $$\mathcal{B}_l=\{p\in M^{T^n}\big|\sigma_2(p)=\eta_l\}$$ for $1\leq l\leq u$.
Then $|M^{T^n}|=\sum_{k=1}^s|\mathcal{A}_k|=\sum_{l=1}^u|\mathcal{B}_l|$.
\vskip .2cm

\begin{proof}[Proof of Theorem~\ref{bounds}]
By Theorem~\ref{iff} it suffices to prove that if the equivariant Chern numbers $\langle
(c_1^{T^n})^i(c_2^{T^n})^j, [M]\rangle=0$ for all $i, j\in {\Bbb
N}$,  then $M^{2n}$ bounds equivariantly.
Now suppose $\langle
(c_1^{T^n})^i(c_2^{T^n})^j, [M]\rangle=0$ for all $i, j\in {\Bbb
N}$. By Theorem~\ref{ABBV}, we can write these equivariant Chern numbers in the following way:
\begin{equation}
\langle (c_1^{T^n})^i(c_2^{T^n})^j,
[M]\rangle=\sum_{k=1}^s\tau_k^i\sum_{l\in \mathcal{L}_k}\eta_l^j\sum_{\sigma_1=
\tau_k\atop \sigma_2=\eta_l}{m_{\sigma}\over\sigma_n}
\end{equation}
where $\mathcal{L}_k=\{l\big|\mathcal{A}_k\cap\mathcal{B}_l\not=\emptyset, 1\leq l\leq u\}$. Obviously, $|\mathcal{L}_k|\leq |\mathcal{A}_k|$ for every $k$.
Let~$i$ vary in the range $0,1, \dots, s-1$. Then~$(\tau_k^i)$ is
an~$s\times s$ van der Monde matrix, so for each $k$,
$$\sum_{l\in \mathcal{L}_k}\eta_l^j\sum_{\sigma_1=\tau_k\atop
\sigma_2=\eta_l}{m_{\sigma}\over\sigma_n}=0.$$
Next, let~$j$ vary in the range $0,1, \dots, |\mathcal{L}_k|-1$. Then~$(\eta_l^j)$ is
a~$|\mathcal{L}_k|\times |\mathcal{L}_k|$ van der Monde matrix, hence
$$\sum_{\sigma_1=\tau_k\atop \sigma_2=\eta_l}{m_{\sigma}\over\sigma_n}=0.$$
Furthermore, by Lemma~\ref{sigma2} we have that ${m_{\sigma}\over\sigma_n}=\sum_{\sigma_1=\tau_k\atop \sigma_2=\eta_l}{m_{\sigma}\over\sigma_n}=0$ so $m_\sigma=0$ for all $\sigma$.
Thus, by Theorem~\ref{ABBV}, all equivariant Chern characteristic numbers of~$M$
are equal to zero, as desired.
\end{proof}

Now we focus on the proof of Theorem~\ref{number}. First we give a
general result.\vskip .2cm
\begin{prop}\label{equ}
If~$s+2\max\limits_{ 1\leq k\leq s}\{|\mathcal{A}_k|\}-3<n$ or~$2u+\max\limits_{1\leq l\leq u}\{|\mathcal{B}_l|\}-3<n$,
then~$M$ bounds equivariantly.
\end{prop}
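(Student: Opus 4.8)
The plan is to derive the Proposition from Theorem~\ref{bounds} combined with an elementary degree count. By Theorem~\ref{bounds}, $M$ bounds equivariantly as soon as $\langle(c_1^{T^n})^i(c_2^{T^n})^j,[M]\rangle=0$ for all $i,j\in\mathbb{N}$, so it suffices to establish these vanishings under either hypothesis. The key observation is a degree constraint: since $c_1^{T^n}$ has degree $2$ and $c_2^{T^n}$ has degree $4$, the product $(c_1^{T^n})^i(c_2^{T^n})^j$ has degree $2i+4j$, and pairing with the fundamental class of the $2n$-manifold $M$ places the equivariant Chern number in $H^{2i+4j-2n}(BT^n;\mathbb{Z})$. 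Since $H^{<0}(BT^n;\mathbb{Z})=0$, this number vanishes automatically whenever $i+2j<n$.

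First I would treat the hypothesis $s+2\max_k\{|\mathcal{A}_k|\}-3<n$. The point is that the van der Monde elimination carried out in the proof of Theorem~\ref{bounds} only ever invokes those equivariant Chern numbers with $i$ in the range $0,\dots,s-1$ and $j$ in the range $0,\dots,\max_k\{|\mathcal{L}_k|\}-1$; recall that $|\mathcal{L}_k|\le|\mathcal{A}_k|$, so $j$ stays strictly below $\max_k\{|\mathcal{A}_k|\}$. For every such pair we have $i+2j\le(s-1)+2(\max_k\{|\mathcal{A}_k|\}-1)=s+2\max_k\{|\mathcal{A}_k|\}-3<n$, so by the degree observation all of these Chern numbers vanish for free. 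Feeding these vanishings into the two successive van der Monde matrices $(\tau_k^i)$ and $(\eta_l^j)$ exactly as in the proof of Theorem~\ref{bounds} yields $\sum_{\sigma_1=\tau_k,\,\sigma_2=\eta_l}m_\sigma/\sigma_n=0$, and then Lemma~\ref{sigma2} forces $m_\sigma=0$ for every $\sigma$. Hence, by Theorem~\ref{ABBV}, every equivariant Chern characteristic number of $M$ vanishes and $M$ bounds.

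The hypothesis $2u+\max_l\{|\mathcal{B}_l|\}-3<n$ is handled by the symmetric argument, regrouping the localization formula of Theorem~\ref{ABBV} first according to the values $\eta_l$ of $\sigma_2$ and then according to the values $\tau_k$ of $\sigma_1$. Writing $\langle(c_1^{T^n})^i(c_2^{T^n})^j,[M]\rangle=\sum_l\eta_l^j\sum_{k\in\mathcal{K}_l}\tau_k^i\sum_{\sigma_1=\tau_k,\,\sigma_2=\eta_l}m_\sigma/\sigma_n$ with $\mathcal{K}_l=\{k\mid\mathcal{A}_k\cap\mathcal{B}_l\ne\emptyset\}$ and $|\mathcal{K}_l|\le|\mathcal{B}_l|$, the elimination now varies $j$ over $0,\dots,u-1$ and $i$ over $0,\dots,\max_l\{|\mathcal{B}_l|\}-1$. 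The indices consumed therefore satisfy $i+2j\le(\max_l\{|\mathcal{B}_l|\}-1)+2(u-1)=2u+\max_l\{|\mathcal{B}_l|\}-3<n$, and the same degree argument forces all these Chern numbers, hence all $m_\sigma$, to vanish.

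The only genuinely delicate point—and the step I would check most carefully—is the bookkeeping of exactly which finite rectangle of indices $(i,j)$ is consumed by the two van der Monde inversions, since the whole argument hinges on that rectangle lying inside the region $i+2j<n$ where the Chern numbers vanish on degree grounds. Getting the off-by-one bounds right (the constant $-3$, together with the containments $|\mathcal{L}_k|\le|\mathcal{A}_k|$ and $|\mathcal{K}_l|\le|\mathcal{B}_l|$) is precisely what makes the two stated strict inequalities exactly sufficient for this method.
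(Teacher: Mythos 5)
Your proof is correct and follows essentially the same route as the paper's: both rely on the degree observation that $\langle(c_1^{T^n})^i(c_2^{T^n})^j,[M]\rangle=0$ whenever $i+2j<n$, the two groupings of the localization formula by $\tau_k$ and by $\eta_l$, and the double van der Monde elimination over the index ranges bounded via $|\mathcal{L}_k|\leq|\mathcal{A}_k|$ and $|\mathcal{K}_l|\leq|\mathcal{B}_l|$ to force $m_\sigma=0$. Your explicit justification of the degree vanishing (landing in $H^{2i+4j-2n}(BT^n;\mathbb{Z})$) is a detail the paper leaves implicit, but the argument is the same.
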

\begin{proof} In a similar way to the proof of Theorem~\ref{bounds}, we can write the equivariant Chern numbers $\langle (c_1^{T^n})^i(c_2^{T^n})^j,
[M]\rangle$ in the following two ways:
\begin{equation}\label{tau}
\langle (c_1^{T^n})^i(c_2^{T^n})^j,
[M]\rangle=\sum_{k=1}^s\tau_k^i\sum_{l\in \mathcal{L}_k}\eta_l^j\sum_{\sigma_1=\tau_k\atop\sigma_2=\eta_l}{m_{\sigma}\over\sigma_n}
\end{equation}
and
\begin{equation}\label{eta}
\langle (c_1^{T^n})^i(c_2^{T^n})^j,
[M]\rangle=\sum_{l=1}^u\eta_l^j\sum_{k\in \mathcal{K}_l}\tau_k^i\sum_{\sigma_1=\tau_k\atop\sigma_2=\eta_l}{m_{\sigma}\over\sigma_n}
\end{equation}
where $\mathcal{L}_k=\{l\big|\mathcal{A}_k\cap\mathcal{B}_l\not=\emptyset, 1\leq l\leq u\}$ as before and $\mathcal{K}_l=\{k\big|
\mathcal{A}_k\cap\mathcal{B}_l\not=\emptyset, 1\leq k\leq s\}$, satisfying that $|\mathcal{L}_k|\leq |\mathcal{A}_k|$ for every $k$ and $|\mathcal{K}_l|\leq |\mathcal{B}_l|$ for every $l$.
\noindent We note that  if $i+2j<n$, then $\langle (c_1^{T^n})^i(c_2^{T^n})^j,
[M]\rangle=0$. If~$s+2\max\limits_{1\leq k\leq s}\{|\mathcal{A}_k|\}-3<n$, then we can let $i$
vary in the range $0,1, \dots,s-1$ and for every $k$, let $j$ vary in the
range $0,1, \dots, |\mathcal{L}_k|-1\leq \max\limits_{1\leq k\leq s}\{|\mathcal{A}_k|\}-1$ in~Equation~(\ref{tau}).
Similarly,
if $2u+\max\limits_{1\leq l\leq u}\{|\mathcal{B}_l|\}-3<n$, then we can let $j$ vary in the
range $0,1,\dots,u-1$ and for every $l$, let $i$ vary in the
range $0,1, \dots, |\mathcal{K}_l|-1\leq \max\limits_{1\leq l\leq u}\{|\mathcal{B}_l|\}-1$ in Equation~(\ref{eta}). Using the proof method of Theorem~\ref{bounds} as above, we can get van der
Monde matrices, which imply that~$m_\sigma=0$ for all~$\sigma$, and
hence $\langle (c_1^{T^n})^i(c_2^{T^n})^j, [M]\rangle=0$ for all $i, j\in {\Bbb N}$. Therefore, $M$
bounds equivariantly by Theorem~\ref{bounds}.
\end{proof}

\begin{lem}\label{opt}
Let $a_1,\dots,a_r$ be positive integers. If $a_1+\dots+a_r=\ell$,
then $r+2\max\{a_i|1\leq i\leq r\}\leq 2\ell+1$.
\end{lem}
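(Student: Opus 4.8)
The plan is to reduce the claimed inequality to two elementary bounds on the largest part. Write $m=\max\{a_i\mid 1\le i\le r\}$ and fix an index $i_0$ with $a_{i_0}=m$. The essential observation is that the remaining $r-1$ parts are positive integers summing to $\ell-m$; since each is at least $1$, this forces $r-1\le \ell-m$, i.e. $r\le \ell-m+1$. This single structural fact captures the tradeoff between having many parts (large $r$) and having a large maximal part (large $m$): the more of the total $\ell$ is concentrated in the maximum, the fewer parts can remain.

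With this in hand the conclusion follows by a short substitution. From $r\le \ell-m+1$ I would write $r+2m\le(\ell-m+1)+2m=\ell+m+1$. It then remains to absorb the extra $m$: since $m$ is one of the positive summands of $\ell$, we trivially have $m\le\ell$, so $\ell+m+1\le 2\ell+1$. Chaining these gives $r+2m\le 2\ell+1$, as required.

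I do not expect a genuine obstacle here; the only point requiring care is choosing the correct direction in which to estimate. One must resist bounding $r$ by $\ell$ (which loses too much) and instead couple $r$ to $m$ through the constraint, so that the two occurrences of $m$ partly cancel. It is worth recording that the bound is sharp: equality holds precisely when $r=1$ and $a_1=\ell$, in which case $r+2m=1+2\ell$. This tightness is exactly what makes the lemma effective when combined with Proposition~\ref{equ} to deduce the lower bound $\lceil n/2\rceil+1$ on the number of fixed points in Theorem~\ref{number}, since it controls $s+2\max_k|\mathcal{A}_k|$ (and symmetrically $2u+\max_l|\mathcal{B}_l|$) in terms of the total number of fixed points $\ell=|M^{T^n}|$.
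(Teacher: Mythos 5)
Your proof is correct and follows essentially the same route as the paper: both rest on the single observation that the $r-1$ parts other than the maximum each contribute at least $1$, giving $r+m\leq \ell+1$, and then absorb the remaining slack (you via $m\leq\ell$, the paper via $r\geq 1$), arriving at the same equality condition $r=1$.
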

\begin{proof}
Obviously, $\text{max}\{a_i|1\leq i\leq r\}\leq \ell-r+1$, and the equation
holds if and only if there is only some one $a_i=\ell-r+1$ and all others are equal to 1. Then
we have the required inequality  $r+2\max\{a_i|1\leq i\leq r\}\leq 2\ell+1$, where the
last equation holds if and only if $r=1$.
\end{proof}

\begin{proof}[Proof of Theorem~\ref{number}]
If~$|M^{T^n}|=|\mathcal{A}_1|+\dots+|\mathcal{A}_s|<{n\over 2}+1$, then by
Lemma~\ref{opt}, $s+2\max\limits_{1\leq k\leq s}\{|\mathcal{A}_k|\}\leq 2|M^{T^n}|+1<n+3$, so~$M$
bounds equivariantly by Proposition~\ref{equ}.
\end{proof}

\begin{rem} Let us look at the case in which $M$ doesn't bound equivariantly and $|M^{T^n}|=\lceil{n\over
2}\rceil+1$.
When $n$ is even,  we have   $s+2\max\limits_{1\leq k\leq s}\{|\mathcal{A}_k|\}=2|M^{T^n}|+1$ by Proposition~\ref{equ} and Lemma~\ref{opt}. This implies that $s=1$
by the proof of Lemma~\ref{opt}, which means that all~$\sigma_1$ are
the same. When $n$ is odd,  we see  that $n+3\leq s+2\max\limits_{1\leq k\leq s}\{|\mathcal{A}_k|\}
\leq 2|M^{T^n}|+1=n+4$. An easy argument shows that $n+3= s+2\max\limits_{1\leq k\leq s}\{|\mathcal{A}_k|\}$ is impossible, so we must have  $s+2\max\limits_{1\leq k\leq s}\{|\mathcal{A}_k|\}=
 2|M^{T^n}|+1$. Thus, in this case $s$ must be 1 and then all~$\sigma_1$ are
the same, too. Moreover, in a similar way to the proof of Theorem~\ref{bounds}, we can show easily that $|M^{T^n}|=u$ so all $\sigma_2$ are distinct. These observations  seemingly imply the existence of a nonbounding unitary torus manifold $M^{2n}$ with $|M^{T^n}|=\lceil{n\over
2}\rceil+1$. Indeed, we can see an example in the case $n=1$, as shown in \cite[Theorem 5]{ck}.
\end{rem}
Finally we conclude this paper with the following conjecture:

\begin{conj}
$\lceil{n\over
2}\rceil+1$ is the best possible lower bound of the number of fixed points for $(2n)$-dimensional nonbounding unitary torus manifolds.
\end{conj}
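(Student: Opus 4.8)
The plan is to prove the contrapositive: I would show that if the number of fixed points is small, namely $|M^{T^n}| < \tfrac{n}{2}+1$, then $M$ bounds equivariantly. Since $|M^{T^n}|$ is an integer, the condition $|M^{T^n}| < \tfrac{n}{2}+1$ is precisely the negation of $|M^{T^n}| \ge \lceil \tfrac{n}{2}\rceil + 1$, as one checks by treating the parities $n$ even and $n$ odd separately; thus this reformulation is equivalent to the stated theorem.

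First I would invoke the partition of the fixed set recorded just before the statement: the sets $\mathcal{A}_1,\dots,\mathcal{A}_s$, which group the fixed points according to the value $\tau_k$ of $\sigma_1$, are disjoint and cover $M^{T^n}$, so that $|\mathcal{A}_1|+\dots+|\mathcal{A}_s| = |M^{T^n}|$. Applying Lemma~\ref{opt} with $a_k = |\mathcal{A}_k|$, $r = s$, and $\ell = |M^{T^n}|$ then yields the key inequality
\[
s + 2\max_{1\le k\le s}\{|\mathcal{A}_k|\} \le 2|M^{T^n}| + 1 .
\]
Next I would feed the smallness hypothesis into this bound: from $|M^{T^n}| < \tfrac{n}{2}+1$ we get $2|M^{T^n}| + 1 < n + 3$, and hence $s + 2\max_k\{|\mathcal{A}_k|\} - 3 < n$. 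This is exactly the first hypothesis of Proposition~\ref{equ}, so that proposition immediately gives that $M$ bounds equivariantly, completing the contrapositive and therefore the theorem.

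I do not expect a serious obstacle at the level of this final statement, because the analytic content is already packaged in Proposition~\ref{equ}, whose proof combines the degree vanishing $i+2j<n \Rightarrow \langle(c_1^{T^n})^i(c_2^{T^n})^j,[M]\rangle = 0$ with the van der Monde elimination used in the proof of Theorem~\ref{bounds}. The only points requiring care are essentially bookkeeping. I would confirm that Lemma~\ref{opt} is applied to the $\sigma_1$-partition $\{\mathcal{A}_k\}$ rather than to the $\sigma_2$-partition $\{\mathcal{B}_l\}$, since the latter would invoke the weaker second branch $2u + \max_l\{|\mathcal{B}_l|\} - 3 < n$ of Proposition~\ref{equ} and produce an inferior bound. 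I would also verify the equivalence between the strict inequality $|M^{T^n}| < \tfrac{n}{2}+1$ and the ceiling bound $\lceil \tfrac{n}{2}\rceil$ for both parities of $n$. It is worth emphasizing that the sharpness of the constant $\lceil n/2\rceil$ traces back to the asymmetric coefficients in the expression $s + 2\max_k|\mathcal{A}_k|$, which Lemma~\ref{opt} controls optimally precisely in this combination.
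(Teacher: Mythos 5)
Your argument does not address the statement at hand. The statement is the closing \emph{conjecture}, which asserts that $\lceil n/2\rceil+1$ is the \emph{best possible} lower bound --- i.e.\ that the bound of Theorem~\ref{number} is sharp. What you have written is a proof of Theorem~\ref{number} itself (the validity of the lower bound), via the contrapositive $|M^{T^n}|<\tfrac{n}{2}+1\Rightarrow M$ bounds equivariantly, using Lemma~\ref{opt} applied to the $\sigma_1$-partition $\{\mathcal{A}_k\}$ and the first branch of Proposition~\ref{equ}. That reasoning coincides with the paper's own proof of Theorem~\ref{number}, but it is logically irrelevant to sharpness: knowing that every nonbounding example has at least $\lceil n/2\rceil+1$ fixed points says nothing about whether some nonbounding example has exactly that many, which is what ``best possible'' requires.

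To prove the conjecture you would need, for each $n$ (or at least for infinitely many $n$, enough to rule out any better linear bound), a construction of a $(2n)$-dimensional unitary torus manifold that does not bound equivariantly and has precisely $\lceil n/2\rceil+1$ fixed points. The paper does not supply such a construction --- this is precisely why the statement is left as a conjecture. The preceding remark only derives necessary constraints on a hypothetical minimal example (namely $s=1$, so all the $\sigma_1(p)$ coincide, while $u=|M^{T^n}|$, so all the $\sigma_2(p)$ are distinct) and points to the single known case $n=1$ from Kosniowski's work. Your proposal contains no idea in this direction, so it does not constitute progress on the stated claim.
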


\end{document}